%% file: cweno2D.tex
\documentclass[a4paper,10pt]{article}
\usepackage[DIV=11]{typearea}
\usepackage{amsmath,amssymb,amsthm}
\usepackage{setspace}
\usepackage[dvipsnames]{xcolor}
\usepackage[unicode, final, colorlinks=true]{hyperref}
\usepackage{enumitem, booktabs}
\usepackage{xcolor}
\usepackage{siunitx}
\usepackage{bbm}
\usepackage{mathtools}
\usepackage{tikz, pgfplots}
\usepackage{lineno}
\usepackage{graphicx}
\usepackage{relsize}
\usepackage{mleftright}
\usepackage{cite}
\usepackage{mathrsfs}
\usepackage[normalem]{ulem}
\usepackage{subcaption}

\usepackage[a4paper,margin=2.5cm]{geometry}

\hypersetup{citecolor=teal, linkcolor=BrickRed, urlcolor=NavyBlue}

\pgfplotsset{compat=1.17}
\usepgfplotslibrary{groupplots}
\usepgfplotslibrary{fillbetween}
\usetikzlibrary{backgrounds}
\usetikzlibrary{arrows.meta}
\pgfplotsset{plot coordinates/math parser=false}
\usetikzlibrary{external}
\usepgfplotslibrary{patchplots}
\usetikzlibrary{shapes.geometric}
\usetikzlibrary{backgrounds}
\usetikzlibrary{intersections}
\usetikzlibrary{spy}
\usetikzlibrary{decorations.pathreplacing}

\mathtoolsset{showonlyrefs}

\newcommand{\semibf}[1]{%
  \ensuremath{%
    \text{%
      \kern0pt\rlap{$#1$}\kern0.03em$#1$%
    }%
  }%
}

\newcommand{\N}{\mathbb{N}}
\newcommand{\R}{\mathbb{R}}
\newcommand{\Z}{\mathbb{Z}}

\newcommand{\defeq}{\coloneqq}
\newcommand{\eqdef}{\eqqcolon}

\newcommand{\ddt}{\partial_t}

\newcommand{\dx}{\Delta x_1}
\newcommand{\dy}{\Delta x_2}
\newcommand{\dt}{\Delta t}

\newcommand{\xb}{\textbf{x}}
\newcommand{\yb}{\textbf{y}}
\newcommand{\Div}{\textnormal{div}_\xb \,}
\newcommand{\rhob}{\boldsymbol{\rho}}

\newcommand{\etab}{\boldsymbol{\eta}}
\newcommand{\nub}{\boldsymbol{\nu}}
\newcommand{\fb}{\boldsymbol{f}}
\newcommand{\Rb}{\semibf{R}}
\newcommand{\imgrho}{\mathcal{I}}

\newcommand{\mydot}{\raisebox{0.3ex}{\scalebox{0.5}{$\bullet$}}}

\newcommand{\qp}{\gamma}

\newcommand{\numfluxkn}[5]{%
  F_{#1,k,n}^{#2,#3}%
  \if\relax\detokenize{#4}\relax
  \else
    \mleft( #4^{k,n}, #5^{k,n} \mright)%
  \fi
}

\newcommand{\numflux}[5]{%
  F_{#1,n}^{#2,#3}%
  \if\relax\detokenize{#4}\relax
  \else
    \mleft( #4, #5 \mright)%
  \fi
}

\newcommand{\flux}[4]{f_{#1,n}^{#2,#3}(#4)}

\newcommand{\paper}[1]{{\color{black}#1}}

\providecommand{\keywords}[1]{\textit{Keywords:} #1}
\providecommand{\msc}[1]{\textit{2020 MSC:} #1}

\theoremstyle{plain}
\newtheorem{theorem}{Theorem}[section]

\newtheorem{lemma}[theorem]{Lemma}

\theoremstyle{plain}

\newtheorem{remark}[theorem]{Remark}

\begin{document}
\date{
  \small
   \today
  }

\title{A Third-Order Maximum-Principle-Preserving CWENO Scheme for Two-Dimensional Nonlocal Conservation Laws}
\author{Anika Beckers\thanks{Chair of Numerical Analysis, Institute for Geometry and Applied Mathematics, RWTH Aachen University, Im~Süsterfeld~2, 52072 Aachen, Germany, \tt{beckers@igpm.rwth-aachen.de}.} \and Jan Friedrich\thanks{Chair of Optimal Control, Department for Mathematics, School of Computation, Information and Technology, Technical University of Munich, Boltzmannstraße 3, 85748 Garching b.\ Munich, Germany, \tt{jan.friedrich@cit.tum.de}.}}

\maketitle
\begin{abstract}
  We present a third-order finite volume central WENO scheme for systems of nonlocal conservation laws in two spatial dimensions. The CWENO reconstruction of the conservative variable provides polynomials that can be evaluated in the entire domain, which is of advantage when approximating the nonlocal terms. Moreover, this method can be augmented with a limiter that preserves the maximum-principle and especially positivity of the solution.
  \bigskip\\
  \noindent \keywords{systems of nonlocal conservation laws, high-resolution CWENO schemes, maximum principle, pedestrian flow models}\\
  \msc{35L65, 35L03, 65M08, 76A30}
\end{abstract}

\section{Introduction and assumptions}\label{sec:intro}
Macroscopic models described by nonlocal conservation laws became of great interest in the last decade. In two spatial dimensions we can model among others crowd movements \cite{ACG15,BGIV20,goatin2025pedestrians,GR24,CGL12,colombo2018nonlocal}, cluster formation and cryptography\cite{CG25,CS26}, or material flow on conveyor belts~\cite{RWGG20}.
Similar to \cite{ACG15,BF26}, we consider the following system 
\begin{equation}\label{eq:system}
    \begin{cases}
        \begin{aligned}
            &\ddt \rho^k + \Div \fb^k\mleft( t, \xb, \rho^k, \etab *  \rhob \mright) = 0 \quad &&,(t, \xb) \in \R^+ \times \R^2,\; k=1,\ldots,K, \\
            &\rhob(0,\xb) = \rhob_0(\xb)&&, \xb \in  \R^2
        \end{aligned}
    \end{cases}
\end{equation}
with $\rhob\!:\R^+ \times \R^2 \rightarrow \R^K$, $\mleft( t,\xb \mright) \mapsto \rhob\mleft( t,\xb \mright) = \mleft( \rho^1,\ldots,\rho^K \mright)\mleft( t,\xb \mright)$
and the flux $\fb^k\!: \R^+ \times \R^2 \times \R \times \R^M \to \R^2$, $k=1,\ldots,K$ with space-dependent nonlocalities, which contain convolutions of the state variables with a mollifier $\boldsymbol{\eta}: \R^2 \rightarrow \R^{M\times K}$. Thus, the system is coupled by $\etab *  \rhob \in \R^M$, where $M$ is the number of combinations of kernels and state variables that have to be convoluted, i.e.\ for $m=1,\ldots,M$ we define
\begin{equation}\label{eq:conv}
  \mleft( \etab * \rhob  \mright)_m \mleft( t, \xb \mright) = \int_{\R^2} \sum_{k=1}^{K} \eta^{m,k}\mleft( \xb - \tilde\xb \mright) \rho^k\mleft( t, \tilde\xb \mright) d\tilde\xb.
\end{equation}
A common approach to compute a numerical solution to nonlocal conservation laws is the usage of finite volume schemes. For other strategies we refer to \cite{keimer2023nonlocal, abreu2025semi}.
Especially first-order finite volume schemes like Lax-Friedrichs-type \cite{ACG15, ACT15}, Upwind- or Godunov-type schemes
\cite{aggarwal2025error, friedrich2018godunov, RWGG20} are employed. 
For general approaches on first-order numerical schemes we refer to \cite{FSS23,BF26, aggarwal2024accuracy, huang2024asymptotic}.
Overall, the underlying idea is to approximate the convolution terms either at the cell centers or the cell interfaces of the equidistant grid discretizing the spatial domain. Then, a suitably adapted numerical scheme from the 'local' case is applied. 
The same idea holds for higher-order schemes. 
For one-dimensional models second-order schemes \cite{SFR25, GKM23} and also higher-order discontinuous Galerkin (DG), finite volume weighted essentially nonoscillatory (FV-WENO) and finite volume central WENO (FV-CWENO) schemes \cite{CGV16,FK19} have been established. 
In two spatial dimensions a second-order scheme is studied in \cite{MGK26} and a finite difference WENO (FD-WENO) method for instance in \cite{BGIV20,goatin2025pedestrians}. 

In this work we introduce a third-order FV-CWENO scheme for two-dimensional nonlocal systems, as in~\eqref{eq:system}, which can be extended to even higher orders. Analogous to the CWENO scheme for one-dimensional equations \cite{FK19}, the finite volume scheme in this work can be equipped with the linear scaling limiter of Zhang and Shu \cite{ZS10} such that a maximum principle for the numerical solution can be proven. This is important especially for positivity preservation when dealing with densities of populations like in crowd movement models. 
While positivity preservation has been proven for the second-order scheme in \cite{MGK26}, to the best of the authors' knowledge, neither this property nor, depending on the model, the preservation of an upper bound has been established for any scheme of third-order or higher.
Moreover, we employ a CWENO reconstruction for nonlocal equations since it provides a complete spatial reconstruction at every time step instead of only discrete point values. This enables an efficient and accurate evaluation of the integral terms.

\paragraph{Assumptions and well-posedness}
For an overview on the theory of nonlocal balance laws including the well-posedness and the singular limit problem we refer to \cite{keimer2023nonlocal, colombo2023overview}. 
The existence and uniqueness of weak solutions for \eqref{eq:system} with linear flux functions is shown in \cite{KPS18}. Nevertheless, in the general case of the two-dimensional Cauchy problem in \eqref{eq:system} we obtain the existence \cite{ACG15} and uniqueness \cite{BF26} of entropy solutions in the sense of Kru\v{z}kov entropy solutions, c.f.~\cite[Def.~2.1]{CGL12}. 
To obtain this well-posedness, we impose the assumptions in \cite[Asm.~2.2]{BF26}, i.e.\ rather classical regularity assumptions on the initial data and the flux as well as the following:
  \begin{enumerate}
        \item[$(\rhob_m)$] there exists $0\leq \rho_m^k \in \R$ such that $\fb^k(\cdot ,\cdot ,\rho_m^k,\cdot )=0$ and $\rho_m^k \leq  \rho_0^{k}$ for $k=1,\ldots,K$,
        \item[$(\etab)$] $\etab \in \mleft( \mathbf{C}^2 \cap \mathbf{W}^{2,\infty} \cap L^1 \mright)\mleft( \R^2; \R^{M \times K} \mright)  $.
  \end{enumerate}
  We denote the image space of $\rho^k$ by $\imgrho_k$, which is set to ${\imgrho_k = [\rho_m^k,\infty)}$ in the general case. 
  If, in addition, the optional assumption 
  \begin{enumerate}
    \item[$(\rhob_M)$] there exists $\rho_M^k \in \R$ such that $\fb^k(\cdot ,\cdot ,\rho_M^k,\cdot )=0$ and $\rho_0^{k} \leq \rho_M^k$
  \end{enumerate}
  holds, we set $\imgrho_k = [\rho_m^k, \rho_M^k]$.
  This provides a specific form of the maximum principle guaranteeing that $\rho^k\mleft( t,\xb\mright) \in \imgrho_k$ for $(t,\xb) \in \R^+ \times \R^2$  if $\rho_0^k \in \imgrho_k$, $k=1,\ldots,K$ \cite{GR24, BF26}. In Thm.~\ref{eq:maxprinciple} we will establish that the numerical solution constructed by our third-order FV-CWENO scheme remains in this set as well.

\section{CWENO scheme for 2D nonlocal conservation laws}\label{sec:cweno}
We discretize equidistantly in space, resulting in rectangular cells $C_{i,j}=[x_1^{i-\frac{1}{2}},x_1^{i+\frac{1}{2}}) \times [x_2^{j-\frac{1}{2}}, x_2^{j+\frac{1}{2}} )$ with centered nodes $\xb^{i,j} = (x_1^i, x_2^j) = (i\dx, j\dy)$, $i,j \in \Z$, where $\dx$ and $\dy$ are the step sizes corresponding to the two dimensions.
The cell averages $\overline{\rho}_{i,j}^{k}\mleft( t \mright)$ of $\rho^k(t,\xb)$ in the cell $C_{i,j}$ depending on the time $t$, for $k=1,\ldots,K$, are defined analogous to the initial data, which is provided by
\begin{equation}\label{eq:rho0_L1}
  \overline{\rho}_{i,j}^{k,0}= \frac{1}{\dx \dy} \int_{C_{i,j}} \rho_0^k\mleft( \xb \mright) d\xb.
\end{equation}
We can rewrite the conservation laws into
\begin{equation}\label{eq:semi-discr-general}
\begin{split}
    \partial_t \overline{\rho}_{i,j}^k(t)
            = - \frac{1}{\dx \dy} \int_{\partial C_{i,j}} \mathbf{f}^k(t, \xb, \rho^k, \etab * \rhob) \,\mydot\, \vec{n}(\xb) \, d\sigma(\xb),
\end{split}
\end{equation}
where $\vec{n}(\xb)$ is the outward normal vector at $\xb \in \R^2$.
Then, a semi-discretization is derived from \eqref{eq:semi-discr-general}
by approximating the integral over the cell interfaces, the convolution terms by a suitable quadrature rule and by replacing the flux function by a numerical flux function. 
As a compromise between runtime and accuracy we concentrate on third-order approximations. However, the approach can be extended to higher orders.
Thus, more precisely, the chosen quadrature rule should be exact for polynomials of degree two. For the integral over the cell interfaces, we consider the Gauss-Legendre quadrature because this choice places no quadrature points at the cell corners,
where the outer normal vector is not uniquely defined. Moreover, only two quadrature points are required per cell interface.

For a general approach on numerical flux functions for nonlocal conservation laws we refer to the definition of \textit{monotone-based numerical flux functions}, cf.~\cite[Def.~3.2]{BF26}.
An example of this is the Lax-Friedrichs-type numerical flux function introduced in \cite{ACG15} or the version proposed in \cite{BF26} for multiplicative flux functions $\fb^k\mleft( t, \xb, \rho^k, \etab *  \rhob \mright)  = g^k(\rho^k) \nub^k(t,\xb, \etab *  \rhob)$ (with appropriate assumptions on $g^k$ and $\nub^k$). In the first component the Lax-Friedrichs type numerical flux from \cite[Eq.~(9)]{BF26}
is given for fixed \mbox{$(t,\xb,\Rb) \in \R^+ \times \R^2 \times \R^M$} by
\begin{equation}\label{eq:LxFnew}
  \begin{aligned}
  F_1^k\mleft( \rho^-,\rho^+; V\mright) = \frac{1}{2}\Bigl( \mleft( g^k( \rho^-) \textnormal{sgn}(V) + g^k(\rho^+) \textnormal{sgn}(V) \mright) - \alpha \mleft( \rho^+ - \rho^- \mright) \Bigr) \left| V \right|
\end{aligned}
\end{equation}
with $V = \nu_1^k(t,\xb,\Rb)$ and $\alpha \geq \sup_{\rho\in \imgrho_k} \left|  (g^k)'(\rho) \right| $.

Based on the semi-discretization, the CWENO scheme consists of different steps, which will be examined in the following.

  \paragraph{\textcolor{black}{CWENO reconstruction polynomials}}
  The general idea of the CWENO reconstruction (of third-order) is to determine a quadratic polynomial $P_{i,j}$ in each cell $C_{i,j}$ by combining a central quadratic polynomial with several linear ones. More precisely, cell average values from a suitable stencil are interpolated to obtain one quadratic polynomial and four linear ones, which are then combined with linear and nonlinear weights to determine the final second-degree reconstruction. 
  We note that the system~\eqref{eq:system} does not add any difficulty to this procedure as the reconstruction polynomials and in particular the weights including the smoothness indicators can be investigated component-wise.
  The CWENO reconstruction for two-dimensional conservation laws was introduced in \cite{LPR00a}. Since the weights are not uniquely defined, we follow \cite{CS19}, i.e.\ we compute the linear weights by a least squares approximation. Note that the results in this work also hold true for other choices that preserve the cell average value of the conserved quantity. 
  Since for the reconstruction polynomials only the cell averages at a fixed time step are used, this procedure does not change when applying it to nonlocal conservation laws. Thus, we refer to \cite[Sec.~2.1 and Sec.~2.3]{CS19} for the definition of the polynomials and the weights.

  \paper{
  \begin{remark}[Systems of conservation laws]\label{rem:recsystem}
    In \cite{LPR99} a choice of smoothness indicators is examined that recognizes a discontinuity in the other state variables of the system.
    Nevertheless, since the equations in~\eqref{eq:system} are only weakly coupled through the convolution terms, we evaluate the smoothness indicators for each component separately.
  \end{remark}
  }

  \begin{remark}[Comparison to WENO reconstructions]
    A WENO reconstruction provides values at the desired points of each cell, while a CWENO reconstruction determines a reconstruction polynomial for each cell, which can be evaluated at any location inside that cell. When dealing with nonlocal equations, this offers a computational advantage, since the values at the quadrature points for the convolution terms are required in addition to the values at the cell interfaces.
  \end{remark}

Next, the reconstruction polynomials are used to approximate the nonlocal terms by evaluating them at the corresponding quadrature points.

\paragraph{Approximation of the nonlocal term}
The components of $ \etab * \rhob$ as defined in \eqref{eq:conv} consist of different convolutions. To achieve a third-order accurate approximation, for each convolution a two-dimensional composite quadrature rule is used.
For a fixed time $t^n$, the nonlocal terms $\etab * \rhob $ are approximated at the cell interfaces $\xb^{i+\frac{1}{2},j\pm \qp}$ or $\xb^{i\pm \qp,j+\frac{1}{2}}$ with $\gamma = 1/\sqrt{3}$ to evaluate the fluxes in \eqref{eq:semi-discr-general}. Here, the Gauss quadrature was chosen to approximate the integral over the cell interfaces in \eqref{eq:semi-discr-general}. Consistent with this choice, the convolution terms can be approximated by the two-dimensional composite Gauss quadrature, constructed as a tensor product of the one-dimensional two-point Gauss–Legendre quadrature rule.
A visualization of this is shown in Fig.~\ref{fig:recpoints}.
\begin{figure}[ht]
  \begin{center}
    \input{figures/convolution.tex} 
    \caption{Approximation with Gauss–Legendre quadrature: The blue dot represents a point $\xb^{i+\frac{1}{2},j+\qp}$, at which the approximation of the nonlocal term is determined and the grey dots correspond to the quadrature points for the convolution terms.}
    \label{fig:recpoints}
  \end{center} 
\end{figure}

The computation of the nonlocal term is based on the evaluations of the reconstruction polynomials. To this end we define $P_\Delta^{k,n}$ as a function that, within each cell, is given by the corresponding reconstruction polynomial at time $t^n$. Thus, the approximations of the convolution terms using the Gauss quadrature rule are defined by
\begin{align*}
   \sum_{k=1}^{K}\mleft( \eta^{m,k} * P_\Delta^{k,n} \mright) \!\bigl( \xb^{i+\frac{1}{2},j+\qp} \bigr)
    \approx & \sum_{k=1}^{K}\frac{\dx \dy}{4} \sum_{p, q \in \Z}\sum_{\xi,\zeta \in \{0,1\}}\!\!\! \eta^{m,k} \Bigl( (p\!+\! \frac{1}{2}\!+\!(2\xi-1)\qp)\dx, (q+2\zeta \qp) \dy  \Bigr) \\
    & \times  P_\Delta^{k,n}\mleft((i\!+\!(1-2\xi)\qp\!-\!p) \dx, (j\!+\!(1-2\zeta)\qp\!-\!q) \dy\mright)  \eqdef \mleft(\Rb_{i+\frac12,j+\qp}^n \mright)_m\!,\\
  \sum_{k=1}^{K} \mleft( \eta^{m,k} * P_\Delta^{k,n} \mright) \!\bigl( \xb^{i+\frac{1}{2},j-\qp} \bigr) \approx & \sum_{k=1}^{K} \frac{\dx \dy}{4} \!\sum_{p, q \in \Z}\sum_{\xi,\zeta \in \{0,1\}}\!\!\! \eta^{m,k} \Bigl( (p \!+\! \frac{1}{2}\!+\!(2\xi-1)\qp)\dx, (q\!+\!\zeta(1-2\qp) ) \dy  \Bigr)\\
    & \times \! P_\Delta^{k,n}\mleft((i\!+\!(1-2\xi)\qp\!-\!p) \dx, (j\!+\!(2\zeta-1)\qp\!-\!\zeta\!-\!q) \dy\mright) \eqdef \mleft( \Rb_{i+\frac12,j-\qp}^n \mright)_m\!.
\end{align*}
The approximation of the convolution terms at the cell interfaces with respect to the $x_2$-direction at $\xb^{i\pm \qp,j+\frac{1}{2}}$, denoted by $\Rb_{i\pm \qp,j+\frac12}^n$, are defined similarly with exchanged roles of the first and second indices. 
As in \cite{BF26} the flux in \eqref{eq:system} reduces to an approximate flux that, for a fixed point in space and time, depends only on the state variable $\rho^k$, i.e.\
$\widetilde{f}^k_1\mleft( \rho^k; t^n, \xb^{i+\frac{1}{2},j\pm \qp} \mright) = f_1^k\mleft( t^n, \xb^{i+\frac{1}{2},j\pm \qp}, \rho^k, \Rb_{i+\frac12,j \pm \qp}^n \mright)$, which we denote by $\flux{1}{i+\frac{1}{2}}{j\pm \qp}{\rho^k}$
and analogously $\flux{2}{i\pm \qp }{j+\frac{1}{2}}{\rho^k}$ for the second component. This approximation is justified by the continuity of the flux with respect to $\xb$ and the smoothness of the convolution, which follows from the regularity assumption on the kernel $\etab$.

We refer to the semi-discretization \eqref{eq:semi-discr-general} and replace the flux, pointwise given by the reduced fluxes, by numerical flux functions denoted by $\numflux{1}{i+\frac{1}{2}}{j\pm \qp}{}{}$ and $\numflux{2}{i \pm \qp}{j+\frac{1}{2}}{}{}$, respectively. These numerical fluxes are evaluated at the values of the reconstruction polynomials.
We note that the computation of the convolution terms above can be done efficiently by Fast Fourier Transforms (FFT).

\begin{remark}\label{rem:FD}
  In many models, e.g.\ for crowd dynamics~\cite{ACG15,BGIV20,goatin2025pedestrians,GR24,CGL12,colombo2018nonlocal}, material flow on conveyor belts~\cite{RWGG20} or cryptography~\cite{CG25,CS26} 
  the gradient of a convolution has to be determined. Following \cite[Sec.~3.2]{goatin2025pedestrians}, we obtain this gradient by applying finite difference approximations on the convolutions instead of computing both convolutions with the two kernel's partial derivatives. For the presented third-order CWENO scheme these finite differences should be of at least the same order. Note that due to the regularity assumption $(\etab)$ no boundary values need to be computed.
\end{remark}

\paragraph{Time Discretization}
  We introduce a time discretization with step size $\dt$ and we denote $t^n = n \, \dt$, $n \in \N$.
  Employing the above approximations yields an ordinary differential equation $\ddt \overline{\rhob} = \mathcal{L}(\overline{\rhob})$.
  In particular, we obtain with abuse of notation for the sake of brevity, for $t \in [t^n,t^{n+1})$ and $\qp = 1/\sqrt{3}$
    \begin{equation}\label{eq:ode}
  \begin{split}
      \partial_t \overline{\rho}_{i,j}^k(t)
              \approx - \sum_{i,j \in  \Z} \sum_{\xi \in \{-1,1\} } \Bigl[ \frac{1}{2\dx} 
              \Bigl( &\numflux{1}{i+\frac{1}{2}}{j+\xi\qp}{P_{i,j}^{k,n}(\xb^{i+\frac{1}{2},j+ \xi\qp})}{P_{i+1,j}^{k,n}(\xb^{i+\frac{1}{2},j+ \xi\qp})}\\
              & - \numflux{1}{i-\frac{1}{2}}{j+\xi\qp}{P_{i-1,j}^{k,n}(\xb^{i-\frac{1}{2},j+\xi\qp})}{P_{i,j}^{k,n}(\xb^{i-\frac{1}{2},j+ \xi\qp})}\Bigr)\\
              + \frac{1}{2\dy} \Bigl( &\numflux{2}{i+\xi\qp}{j+\frac{1}{2}}{P_{i,j}^{k,n}(\xb^{i+\xi\qp,j+\frac{1}{2}})}{P_{i,j+1}^{k,n}(\xb^{i+\xi\qp,j+\frac{1}{2}})}\\
              & - \numflux{2}{i+\xi\qp}{j-\frac{1}{2}}{P_{i,j-1}^{k,n}(\xb^{i+\xi\qp,j-\frac{1}{2}})}{P_{i,j}^{k,n}(\xb^{i+\xi\qp,j-\frac{1}{2}})} \Bigr)
              \Bigr]. 
  \end{split}
  \end{equation}
  Equation \eqref{eq:ode} can be solved numerically using a suitable time integration of order three, for instance a strong stability preserving (SSP) Runge-Kutta method. To reduce the computational effort of evaluating $\mathcal{L}$, and thus approximating the convolution terms, in intermediate stages, we employ a multistep Runge-Kutta method as in \cite{goatin2025pedestrians}. A third-order example from \cite{Shu88} is the following 
  six-step method
  \begin{align}\label{eq:RK}
    \rho^{n+1} = \frac{108}{125} \mleft(\rho^n + \frac{5}{3} \dt \mathcal{L}(\rho^n) \mright) + \frac{17}{125} \mleft( \rho^{n-5} + \frac{30}{17}\dt \mathcal{L}(\rho^{n-5}) \mright)
  \end{align}
  with a CFL restriction factor $C_{\text{SSP}} = 0.57$.

  In general, for a time discretization method with CFL restriction factor $C_{\text{SSP}}$, we impose the following CFL condition
  \begin{align}\label{eq:CFL}
      \lambda_1 a_1 + \lambda_2 a_2 \leq C_{\text{SSP}}
  ,\end{align}
  with $\lambda_1 \coloneq \frac{\dt}{\dx}, \lambda_2 \coloneq \frac{\dt}{\dy}$ and 
  $a_1 \defeq L_{1,1} + L_{1,2}$, $a_2 \defeq L_{2,1} + L_{2,2}$. Here, $L_{\ell_1,\ell_2}$, $\ell_1,\ell_2=1,2$, denote the Lipschitz constants of the numerical fluxes with respect to the $x_{\ell_1}$-direction and the $\ell_2$-th variable.

  \subsection{Maximum Principle}\label{sec:maxprinc}
  In this section we prove that the numerical solution of \eqref{eq:ode} together with a SSP Runge-Kutta method fulfills a maximum principle under a slight modification. The desired range is prescribed by the intervals $\imgrho_k$ of the analytical solution, which have been introduced in Sec.~\ref{sec:intro}.
  Therefore, the CWENO reconstruction is equipped with a linear scaling limiter, which was already used in the one-dimensional case \cite{FK19} to ensure a maximum principle. 

  \paragraph{Linear scaling limiter of Zhang and Shu}
  The scaling limiter introduced in \cite{ZS10, LO96} adjusts the reconstruction polynomials to fit into the desired range given by the bounds for the analytical solution. 
  At the same time, it preserves the cell average values $\overline{\rho}_{i,j}^{k,n}$ and keeps the corresponding high-order accuracy \cite{ZS10, LO96}.
  To this aim, the reconstruction polynomials $P_{i,j}^{k,n}$ 
  are replaced by the scaled polynomials 
  \begin{equation}\label{eq:limiter}
    \widetilde P_{i,j}^{k,n}(\xb) = \overline{\rho}_{i,j}^{k,n} + \theta \; (P_{i,j}^{k,n}(\xb) - \overline{\rho}_{i,j}^{k,n}) \text{ with } \theta \!=\! \min \left\{\! \left| \frac{\sup {\imgrho_k} - \overline{\rho}_{i,j}^{k,n}}{M_{i,j}^{k,n}} - \overline{\rho}_{i,j}^{k,n} \right|\!, \left| \frac{\inf \imgrho_k - \overline{\rho}_{i,j}^{k,n}}{m_{i,j}^{k,n}} - \overline{\rho}_{i,j}^{k,n} \right| , 1 \right\}\!,
  \end{equation}
  where $M_{i,j}^{k,n} = \displaystyle \max_{\xb\in C_{i,j}} P_{i,j}^{k,n}(\xb)$ and $m_{i,j}^{k,n} = \displaystyle \min_{\xb\in C_{i,j}} P_{i,j}^{k,n}(\xb)$.

  By applying the limiter on the CWENO reconstruction we obtain a slightly scaled polynomial within the desired range. To obtain the maximum principle for the fully-discrete scheme we need to adapt the CFL number \eqref{eq:CFL}. Note that we only obtain an upper bound if the assumption $(\rhob_M)$ holds, while a lower bound is always provided by the assumption $(\rhob_m)$.
  We now prove this result, first for a forward Euler time discretization and then for general SSP Runge-Kutta methods with nonnegative coefficients.
  \begin{lemma}\label{lem:maxprinciple_fE}
    Let $(\rhob_m)$ and, if applicable, $(\rhob_M)$ hold. Further, assume that the ODE \eqref{eq:ode} is solved by forward Euler steps and that the numerical flux functions are given by monotone-based numerical fluxes as in \cite[Def.~3.2]{BF26}. Let the time step size $\dt$ be restricted by the CFL condition
    $\lambda_1 a_1 + \lambda_2 a_2 \leq 1/6$
    with $\lambda_1=\frac{\dt}{\dx}$, $\lambda_2=\frac{\dt}{\dy}$ and $a_1 \defeq L_{1,1} + L_{1,2}, a_2 \defeq L_{2,1} + L_{2,2}$ depending on the Lipschitz constants of the numerical flux.
    Then, the maximum principle $\overline{\rho}^{k,n}_{i,j} \in \imgrho_k$ for $i,j \in  \Z$, $n \in \N$ and $k=1,\ldots,K$ is fulfilled.  
  \end{lemma}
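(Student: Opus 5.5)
We argue by induction on $n$, following the Zhang--Shu strategy \cite{ZS10} as in the one-dimensional case \cite{FK19}. Assume $\overline{\rho}^{k,n}_{i,j}\in\imgrho_k$ for all $i,j,k$. After the linear scaling limiter \eqref{eq:limiter}, the polynomials $\widetilde P^{k,n}_{i,j}$ take values in $\imgrho_k$ on all of $C_{i,j}$ and still have cell average $\overline{\rho}^{k,n}_{i,j}$. The convolution approximations $\Rb^n$ are thus fixed numbers. The reduced fluxes $f^{\cdot,\cdot}_{\ell,n}$ then depend only on the state, and by $(\rhob_m)$ (and $(\rhob_M)$) they vanish at $\rho^k_m$ (and $\rho^k_M$). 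The monotone-based numerical fluxes of \cite[Def.~3.2]{BF26} are, at each interface point, consistent with these reduced fluxes, nondecreasing in the first argument and nonincreasing in the second, with Lipschitz constants $L_{\ell,1},L_{\ell,2}$.

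\textbf{Step 1: a convex decomposition of the cell average.} I would express the cell average through the values of $\widetilde P^{k,n}_{i,j}$ at the eight interface Gauss points plus a remainder. Consider the tensor product of the two-point Gauss rule in one direction with the three-point Gauss--Lobatto rule (weights $1/6,2/3,1/6$) in the other. This rule is exact for quadratics. Averaging the two orientations gives
\[
\overline{\rho}^{k,n}_{i,j}=\sum_{\xi\in\{-1,1\}}\tfrac{1}{12}\Bigl(u^{+,\xi}_{1}+u^{-,\xi}_{1}+u^{+,\xi}_{2}+u^{-,\xi}_{2}\Bigr)+\tfrac{2}{3}\,\overline{w},
\]
where $u^{\pm,\xi}_1=\widetilde P^{k,n}_{i,j}(\xb^{i\pm\frac12,j+\xi\qp})$, and analogously $u^{\pm,\xi}_2$ at $\xb^{i+\xi\qp,j\pm\frac12}$. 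The term $\overline{w}$ is a convex combination of interior values of $\widetilde P^{k,n}_{i,j}$, hence lies in $\imgrho_k$. Each of the eight boundary values then carries weight $1/12$. This explains the constant $1/6$: the two Gauss points on a face share $2\cdot\frac1{12}=\frac16$.

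\textbf{Step 2: splitting the forward Euler update.} Insert the decomposition into $\overline{\rho}^{k,n+1}_{i,j}=\overline{\rho}^{k,n}_{i,j}+\dt\,\mathcal L$ from \eqref{eq:ode}. Use the standard trick of adding and subtracting the numerical flux evaluated at $(u,u)$ at each interface point. By consistency, $F(u,u)=f(u)$ at that point. Regroup the update into a sum of terms of the form
\[
\tfrac1{12}\Bigl(u^{+}-\mu\bigl[F(u^+,u^{+}_{\text{nb}})-F(u^{-},u^{+})\bigr]\Bigr),
\]
with $\mu=6\lambda_\ell$. Each such term is a three-point monotone first-order scheme with arguments in $\imgrho_k$. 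Terms at the two Gauss points of a face pair naturally, but the left and right faces in the same row use different Gauss points. To handle this, pair each point value with flux differences in a one-dimensional monotone form, $H(a,b,c)=b-\mu\,[F(b,c)-F(a,b)]$, mixing in the opposite-face value via the ``$F(u,u)=f(u)$'' telescoping. For $\mu(L_{\ell,1}+L_{\ell,2})\le1$, $H$ is nondecreasing in each argument. Moreover, $H(\rho_m,\rho_m,\rho_m)=\rho_m$ because the flux vanishes there, and similarly for $\rho_M$.

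\textbf{Step 3: concluding the induction.} The CFL condition $\lambda_1a_1+\lambda_2a_2\le1/6$ guarantees monotonicity of all pieces simultaneously. The weights of the $x_1$- and $x_2$-pieces are split according to $\lambda_1a_1$ and $\lambda_2a_2$; this splitting is where the sum condition, rather than separate conditions, enters. Monotonicity then yields $\rho_m^k\le\overline{\rho}^{k,n+1}_{i,j}\le\rho^k_M$. The base case holds since \eqref{eq:rho0_L1} averages $\rho_0^k\in\imgrho_k$.

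\textbf{Main obstacle.} The delicate step is Step 2. Flux differences such as $F^{1}_{i+\frac12,j+\xi\qp}-F^{1}_{i-\frac12,j+\xi\qp}$ involve reduced fluxes evaluated at different points, with different $\Rb$ values. Consequently $F(u,u)$ on one face does not cancel against $F(u,u)$ on the other. My first attempt would not pair across faces at all. Instead I would treat each interface flux separately, using only the local sign conditions:
\[
F(u,v)-F(u,u)\le0\ \text{for } v\ge u,
\qquad
F(u,u)=f(u),
\qquad
f(\rho_m)=0 .
\]
Writing, e.g., $F(u^+,v)=[F(u^+,v)-F(u^+,\rho_m)]+[F(u^+,\rho_m)-F(\rho_m,\rho_m)]$ then bounds each flux by Lipschitz multiples of $u-\rho_m$ and $v-\rho_m$, with $\rho_m$ nonnegatively weighted. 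This gives the lower bound directly: $\overline{\rho}^{k,n+1}_{i,j}-\rho_m$ is a combination of the nonnegative quantities $u-\rho_m$ with coefficients $\frac1{12}-\lambda_\ell(\dots)\ge0$ under the CFL condition. The upper bound is symmetric.
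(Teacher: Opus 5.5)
\textbf{Gap in Step~1.} Your last paragraph contains a workable argument, but as written it rests on a wrong decomposition, and the error propagates into the final coefficient check. If you average the two Gauss$\otimes$Lobatto orientations with weight $\tfrac12$ each, every one of the eight interface values gets weight $\tfrac12\cdot\tfrac12\cdot\tfrac16=\tfrac1{24}$, not $\tfrac1{12}$. Your displayed identity has total weight $\tfrac{8}{12}+\tfrac23=\tfrac43$.

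With the correct weight, consider the coefficient of $a-\rho^k_m$, where $a=\widetilde P^{k,n}_{i,j}(\xb^{i+\frac12,j+\xi\qp})$. It is $\tfrac1{24}-\tfrac{\lambda_1}{2}L_{1,1}$, so you need $\lambda_1L_{1,1}\le\tfrac1{12}$. This does not follow from $\lambda_1a_1+\lambda_2a_2\le\tfrac16$: take $\lambda_2a_2=0$, $L_{1,2}=0$ and $\lambda_1a_1=\tfrac16$.

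What you need is the Zhang--Shu weighting you only allude to in Step~3. Write $\overline\rho^{k,n}_{i,j}=\mu_1Q_1+\mu_2Q_2$, where $Q_\ell$ is the Gauss$\otimes$Lobatto rule with Lobatto nodes in the $x_\ell$-direction and $\mu_\ell=\lambda_\ell a_\ell/(\lambda_1a_1+\lambda_2a_2)$. Each $x_1$-interface value then carries weight $\mu_1/12$. The coefficient becomes $\tfrac{\mu_1}{12}-\tfrac{\lambda_1}{2}L_{1,1}$, which is nonnegative iff $L_{1,1}(\lambda_1a_1+\lambda_2a_2)\le \tfrac{a_1}{6}$, and the CFL condition gives this. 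The constant $\tfrac16$ is the Lobatto endpoint weight $\min_l\widetilde w_l$, not two Gauss points sharing $2\cdot\tfrac1{12}$. The ratio of flux coefficient to point weight is $\frac{\lambda_1a_1+\lambda_2a_2}{a_1\widetilde w_l}$, as in \eqref{eq:1dschememaxpr}.

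\textbf{The Step~2 obstacle does not arise in the paper's route.} The points $\xb^{i-\frac12,j+\xi\qp}$ and $\xb^{i+\frac12,j+\xi\qp}$ lie on the same $x_2$-line. Zhang--Shu pair exactly the two inner values $d,a$ there and add and subtract a single term $F(d,a)$ with arbitrarily fixed parameters. That term cancels identically within the cell's own update, so nothing has to cancel across faces. The resulting three-point schemes just have different flux functions at their two interfaces, which is why \eqref{eq:1dschememaxpr} allows arbitrary $\xb^{l\pm\frac12}$ and $\widetilde\Rb$. Their maximum principle needs only that each interface flux vanishes at $\rho^k_m$ (and $\rho^k_M$) on its own. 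That follows from $(\rhob_m)$ and consistency, and is what \cite[Thm.~3.7]{BF26} supplies.

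\textbf{Comparison once corrected.} With the $\mu_\ell$-weights, your direct argument is a valid alternative route. Instead of a convex combination of first-order schemes, you bound each interface flux one-sidedly, using neighbour values in $\imgrho_k$:
\[
F(a,b)\le F(a,\rho^k_m)\le L_{1,1}(a-\rho^k_m),\qquad F(c,d)\ge F(\rho^k_m,d)\ge -L_{1,2}(d-\rho^k_m).
\]
You then absorb these bounds into the Lobatto endpoint weights. This is self-contained and needs no auxiliary flux. Since each inner value meets only one Lipschitz constant, it even works under $\lambda_1\max(L_{1,1},L_{1,2})+\lambda_2\max(L_{2,1},L_{2,2})\le\tfrac16$. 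The paper's route instead reuses existing results, \cite[Lem.~2.2]{ZS10} and \cite[Thm.~3.7]{BF26}, without new estimates.
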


  \begin{proof}[Sketch of proof]
    We recall \cite{ZS10}, where a two-dimensional scheme like ENO, WENO or DG is decomposed into a convex combination of one-dimensional first-order schemes. Thus, the proof of the maximum principle is based on the maximum principle for the one-dimensional first order schemes and can directly be applied to CWENO reconstructions as well as to the nonlocal case, since the convolution terms are approximated and do not affect the maximum principle, see \cite[Thm.~3.7]{BF26}. 
    Here, we adapt the proof of \cite{ZS10} to arbitrary monotone-based numerical fluxes from \cite{BF26} by an adjusted CFL condition.

    More precisely, we follow \cite[Sec.~3.1]{ZS10} to decompose the two-dimensional scheme into a convex combination of higher-order one-dimensional schemes, which then can be written into a convex combination of first-order schemes, see \cite[Lem.~2.2]{ZS10}. 
    The  weights $\widetilde{w}_l$ of this convex combination represent the quadrature weights from a sufficiently accurate quadrature rule whose nodes include the endpoints of the interval. This also restricts the CFL condition to 
    $\lambda_1 a_1 + \lambda_2 a_2 \leq \min_l\widetilde{w}_l$
    ,
    where $a_1 \defeq L_{1,1} + L_{1,2}$ and $a_2 \defeq L_{2,1} + L_{2,2}$. 
    A suitable choice for the quadrature weights is the Simpson's rule with $\min_l w_l = 1/6$.
    Note that this choice does not have to be consistent with the other quadrature formulas in this work. 
    Hence, only the maximum principle for one-dimensional schemes like 
    \begin{align}\label{eq:1dschememaxpr}
      \nu_l^{n+1} = \nu_l  - \frac{a_1 \lambda_1 + a_2 \lambda_2}{a_1 \widetilde{w}_l} \Bigl( F_1 \mleft(\nu_l, \nu_{l+1}; t^n, \xb^{l+\frac{1}{2}}, \widetilde{\Rb}_{\nu_{l+\frac{1}{2}}} \mright)
        -F_1 \mleft(\nu_{l-1}, \nu_l; t^n, \xb^{l-\frac{1}{2}}, \widetilde{\Rb}_{\nu_{l-\frac{1}{2}}}\mright) \Bigr)
    \end{align}
    has to be ensured for $\nu_{l-1}, \nu_l, \nu_{l+1} \in \imgrho_k$ and any $\xb^{l-\frac{1}{2}}, \xb^{l+\frac{1}{2}} \in \R^2$, $\widetilde{\Rb}_{\nu_{l-\frac{1}{2}}}, \widetilde{\Rb}_{\nu_{l+\frac{1}{2}}} \in \R^M$. Proceeding analogous to \cite[Thm.~3.7]{BF26} in the one-dimensional setting and using $\frac{\lambda_1 a_1 + \lambda_2 a_2}{\widetilde{w}_\ell} \leq 1$ we conclude the proof.
  \end{proof}

  From this, the maximum principle can be directly derived for arbitrary SSP Runge-Kutta methods
  under an adapted CFL condition.
  \begin{theorem}\label{eq:maxprinciple}
    Assume that $(\rhob_m)$ and, if applicable, $(\rhob_M)$ hold. Let the ODE \eqref{eq:ode} be solved by an SSP Runge-Kutta method with nonnegative coefficients and a CFL restriction factor $C_{\text{SSP}}$. Further, suppose that the numerical flux functions are given by monotone-based numerical fluxes as in \cite[Def.~3.2]{BF26} and that the time step size $\dt$ be restricted by the CFL condition
    $$\lambda_1 a_1 + \lambda_2 a_2 \leq \frac{C_{\text{SSP}}}{6}$$
    with $\lambda_1=\frac{\dt}{\dx}$, $\lambda_2=\frac{\dt}{\dy}$ and $a_1 \defeq L_{1,1} + L_{1,2}, a_2 \defeq L_{2,1} + L_{2,2}$ depending on the Lipschitz constants of the numerical flux.
    Then, the maximum principle $\overline{\rho}^{k,n}_{i,j} \in \imgrho_k$ for $i,j \in  \Z$, $n \in \N$ and $k=1,\ldots,K$ is fulfilled.
  \end{theorem}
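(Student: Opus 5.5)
The plan is the standard SSP argument: write one step of the Runge-Kutta method as a convex combination of forward Euler steps, and apply Lemma~\ref{lem:maxprinciple_fE} to each Euler step.

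\emph{Shu--Osher form.} An SSP Runge-Kutta method with nonnegative coefficients can be written, for the stages $s=1,\ldots,S$, as
\begin{equation}
  \overline{\rhob}^{(s)} = \sum_{r=0}^{s-1} \Bigl( \alpha_{s,r}\, \overline{\rhob}^{(r)} + \beta_{s,r}\, \dt\, \mathcal{L}\bigl(\overline{\rhob}^{(r)}\bigr) \Bigr), \qquad \alpha_{s,r},\beta_{s,r}\ge 0, \quad \sum_{r=0}^{s-1}\alpha_{s,r}=1 .
\end{equation}
Here $\overline{\rhob}^{(0)}$ is the current value and $\overline{\rhob}^{n+1}$ is the last stage. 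The CFL restriction factor is $C_{\text{SSP}}=\min_{s,r}\alpha_{s,r}/\beta_{s,r}$, with the convention that terms with $\beta_{s,r}=0$ impose no constraint. Whenever $\alpha_{s,r}>0$ we rewrite each summand as
\begin{equation}
  \alpha_{s,r}\Bigl(\overline{\rhob}^{(r)} + \tfrac{\beta_{s,r}}{\alpha_{s,r}}\dt\,\mathcal{L}\bigl(\overline{\rhob}^{(r)}\bigr)\Bigr),
\end{equation}
which is $\alpha_{s,r}$ times a forward Euler step with effective step size $\dt_{\text{eff}}=\tfrac{\beta_{s,r}}{\alpha_{s,r}}\dt\le \dt/C_{\text{SSP}}$. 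Multistep methods such as \eqref{eq:RK} have the same structure, with earlier time levels $\overline{\rhob}^{n-5}$ in place of earlier stages, and are treated identically.

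\emph{Induction over stages.} The induction hypothesis is that all cell averages $\overline{\rho}^{(r),k}_{i,j}$ already computed lie in $\imgrho_k$; for the initial data this follows from $(\rhob_m)$ and $(\rhob_M)$ via \eqref{eq:rho0_L1}. At each stage we apply the limiter \eqref{eq:limiter} to the CWENO polynomials built from these averages. The nonlocal terms $\Rb$ and the fluxes are then evaluated with the limited polynomials. The scaled CFL condition gives $\lambda_1^{\text{eff}} a_1+\lambda_2^{\text{eff}} a_2 \le (\lambda_1 a_1+\lambda_2 a_2)/C_{\text{SSP}}\le 1/6$, so Lemma~\ref{lem:maxprinciple_fE} applies to each Euler substep.

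\emph{Main obstacle.} The delicate point is that Lemma~\ref{lem:maxprinciple_fE} is stated for the time-level iteration, but we need it as a one-step statement. The required form is: if the input averages lie in $\imgrho_k$, the limited reconstructions are used, and the CFL bound holds with $\dt_{\text{eff}}$, then one Euler step maps into $\imgrho_k$. This is what its proof actually shows. The convex decomposition into the 1D first-order schemes \eqref{eq:1dschememaxpr} uses only the current averages and limited point values, and the approximated convolution values enter as arbitrary parameters $\widetilde{\Rb}$. The bound therefore does not depend on which stage the $\mathcal{L}$-evaluation comes from, nor on how $\Rb$ was computed.

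\emph{Conclusion.} Since $\imgrho_k$ is convex, each stage is a convex combination (the $\alpha_{s,r}$ sum to one) of elements of $\imgrho_k$, hence lies in $\imgrho_k$. Stages with $\alpha_{s,r}=0$ and $\beta_{s,r}>0$ cannot occur under a finite $C_{\text{SSP}}$, so no summand escapes this argument. Induction over stages and then over $n$ yields $\overline{\rho}^{k,n}_{i,j}\in\imgrho_k$ for all $i,j,n,k$. If $(\rhob_M)$ fails, the same argument works with only the lower bound, since $[\rho_m^k,\infty)$ is also convex.
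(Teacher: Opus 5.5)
Your proposal is correct and follows the same route as the paper. The paper likewise writes the SSP method (following \cite[Sec.~2.2]{ZS10}) as a convex combination of forward Euler steps with effective step size at most $\dt/C_{\text{SSP}}$, and applies Lemma~\ref{lem:maxprinciple_fE} to each step; your remarks that the lemma is really a one-step statement and that the limiter must be applied at every stage are sound elaborations. One small slip: with your definition $C_{\text{SSP}}=\min_{s,r}\alpha_{s,r}/\beta_{s,r}$, a term with $\alpha_{s,r}=0<\beta_{s,r}$ gives $C_{\text{SSP}}=0$, so such terms are ruled out by $C_{\text{SSP}}>0$, not by finiteness of $C_{\text{SSP}}$.
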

  Following the approach of \cite[Sec.~2.2]{ZS10} the proof relies on expressing the time discretization as a convex combination of forward Euler steps and applying Lem.~\ref{lem:maxprinciple_fE}.

  Based on the proofs of Lem.~\ref{lem:maxprinciple_fE} and Thm.~\ref{eq:maxprinciple} we notice that the scaling parameter $\theta$ in \eqref{eq:limiter} can be simplified:
  \begin{remark}
    Instead of computing the values $M_{i,j}^{k,n}$ and $m_{i,j}^{k,n}$ based on the extrema in cell $C_{i,j}$, it is sufficient to evaluate the polynomials at the reconstruction points of the quadrature rule used in the proof of Lem.~\ref{lem:maxprinciple_fE} to decompose high-order one-dimensional schemes into first-order schemes.
    In our setting this set of points in space is given by $$S=\{\xb^{i+\frac{1}{2} \xi,j+\qp \zeta} \,| \, \xi=-1,0,1, \; \zeta=-1,1 \} \cup \{\xb^{i+\qp \zeta,j+\frac{1}{2} \xi} \,| \, \xi=-1,0,1, \; \zeta=-1,1 \}. $$
  \end{remark}
 
  \begin{remark}\label{rem:FDmax}
    Since the maximum principle for \eqref{eq:1dschememaxpr} holds independently of the values for the approximate convolution terms, it still applies when computing finite differences instead of the analytical gradient of these terms as described in Rem.~\ref{rem:FD}.
  \end{remark}

  \section{Numerical examples}\label{sec:examples}
  In this section, we present numerical examples to demonstrate the performance of the proposed CWENO scheme.
 First, we investigate a time reversible model that provides an exact solution, which can be used for convergence tests. 
Moreover, a nonlinear model describing crowd dynamics is considered with solutions that 
are bounded from below and above. Thus, we examine the maximum principle for the numerical solution. 

In all examples, we compute the gradients of the convolution terms using finite differences on their approximations, as mentioned in Rem.~\ref{rem:FD}, which does not prevent the maximum principle from still applying, see Rem~\ref{rem:FDmax}.
More precisely, we use the fourth-order centered finite differences 
denoted in \cite[Eq.~(14)]{goatin2025pedestrians}.
Moreover, in both examples the Lax-Friedrichs type numerical flux \eqref{eq:LxFnew} is used and
for the time discretization we employ the multistep Runge-Kutta method \eqref{eq:RK}.

\subsection{Encryption-decryption}\label{sec:example1}
We consider the model proposed in \cite[Section 3.4]{CG25}
\begin{equation}\label{eq:encr-decr}
    \begin{cases}
        \begin{aligned}
            &\ddt \rho + \Div [\rho \; \nub\left( t, \xb, \nabla (\tilde \eta * \rho) \right)] = 0&&, (t,\xb) \in \R^+ \times \R^2\\
            &\rho(0,\xb) = \rho_0(\xb)&&, \xb \in  \R^2 
        \end{aligned}
    \end{cases}
  \end{equation}
with 
$$\nub\left( t, \xb, \nabla \tilde \eta *  \rho \right) =   \begin{bmatrix}
0 & -1 \\
1 & 0
\end{bmatrix}  \frac{\nabla \left(  \tilde\eta * \rho \right)}{\sqrt{1+ \left\lVert \nabla \left( \tilde\eta * \rho \right) \right\rVert _2^2}}, \quad \tilde\eta_\ell\mleft( \xb \mright) = \cos^5\mleft( \frac{\pi}{2\ell^2} \left\lVert \xb - \yb \right\rVert _2^2 \mright)\raisebox{0.6ex}{\scalebox{1.2}{$\chi$}}_{B_\ell(0)}\left( \left\lVert \xb - \yb \right\rVert _2 \right),$$
where $\yb = (0.2, 0.2)^T$ and $\ell=0.8$.
The above equation \eqref{eq:encr-decr} is reversible in time \cite[Thm. 2.2]{CG25} and thus, can be used for encrypting and decrypting data. Especially in two spatial dimensions, decryption requires a high resolution \cite{CG25}, which motivates the application of high-order schemes. 
After the initial datum has been encrypted up to a fixed time $T>0$, we compute the decrypted solution at $t=0$ and consider its $L^1$-distance to the initial data. With these errors we examine the convergence rates for a smooth solution to verify the third-order accuracy of the CWENO scheme.

We employ the Lax-Friedrichs-type numerical flux \eqref{eq:LxFnew} with $\alpha = 1$, which simplifies to an Upwind-type numerical flux for this model.
We apply periodic boundary conditions to examine the problem on the bounded domain $[-1,1]^2$ while retaining all information to be reversed.
We denote the number of cells in each direction by $N$, i.e.\ $\dx = \dy = 2/N$, and the time step size is chosen by the CFL condition~\eqref{eq:CFL} as $\dt = \frac{\dx}{4} C_{\text{SSP}} = \frac{0.285}{N}$.
The initial data are obtained by approximating the cell averages of the function
\begin{align}\label{eq:smoothinitial}
    \rho_0\mleft(x_1,x_2  \mright) =  0.25 \sin\mleft(\pi x_1 + \frac{\pi}{3} \mright) \sin\mleft(\pi x_2 + \frac{\pi}{3} \mright)+0.25
\end{align}
using the two-dimensional Gauss-Legendre quadrature of third order.
Fig.~\ref{fig:smoothplots} illustrates the initial density on the left hand side and in the middle the encrypted density at time $T=3$ obtained using the CWENO scheme on a grid with $N=2048$ cells for each direction. The difference between the initial data and the decrypted solution at time $t=0.0$ in each cell is displayed on the right hand side of Fig.~\ref{fig:smoothplots}. 
Moreover, the $L^1$-errors and the convergence rates for different grid sizes are given in the table of Fig.~\ref{fig:smooth} for a first-order Upwind-type scheme and for our third-order CWENO scheme.
We observe the expected order of convergence for both schemes. On the left of Fig.~\ref{fig:smooth} the error is plotted against the runtime on a log-log scale. The computational times are each the median of 100 identical samples. 
The shown data points are the results for $N \in \{64,128,256,512,1024,2048\}$ using the first-order scheme and for $N \in \{64,128,256,512, 1024\}$ using the third-order CWENO scheme. 
We observe that especially for finer grids the CWENO scheme obtains smaller errors within a shorter computational time.

\begin{figure}
    \centering
    \begin{subfigure}[b]{0.325\textwidth}
        \centering
        \includegraphics[width=\textwidth]{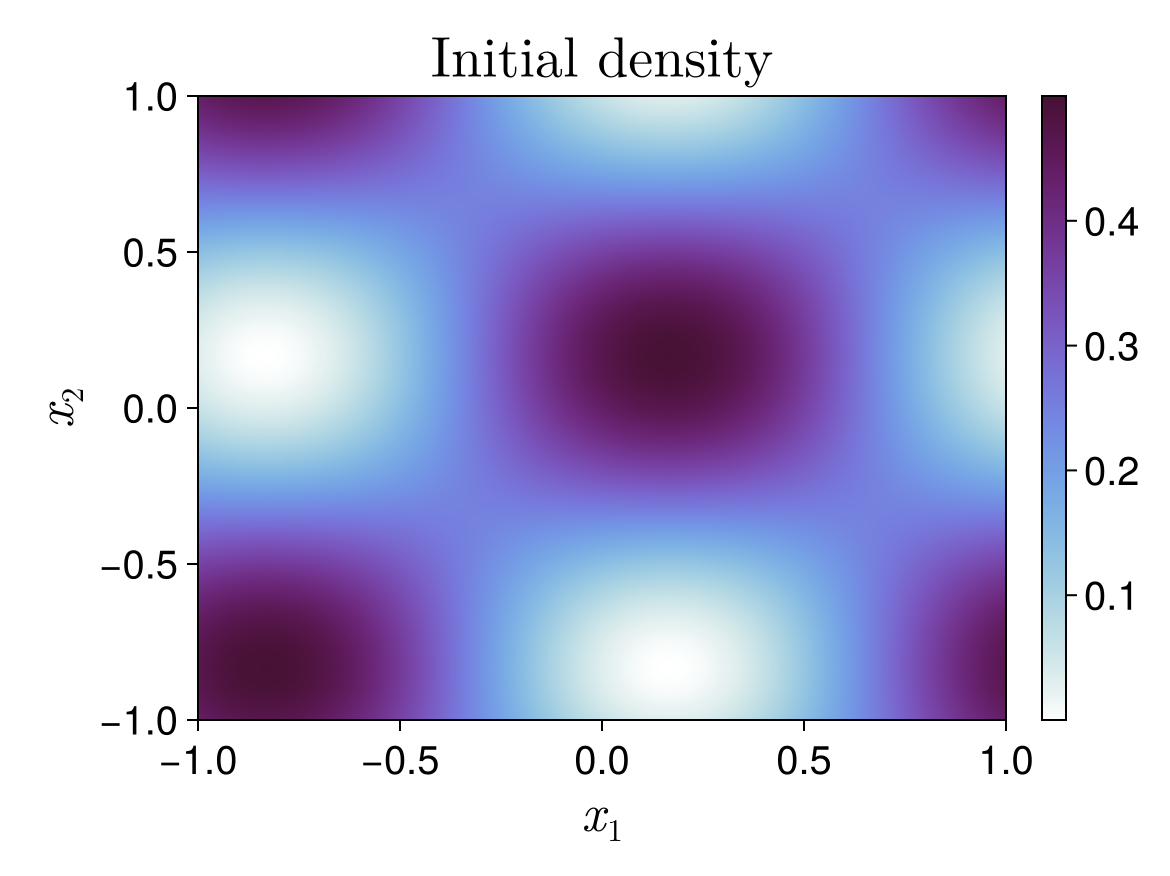}
    \end{subfigure}
    \hfill
    \begin{subfigure}[b]{0.325\textwidth}
        \centering
        \includegraphics[width=\textwidth]{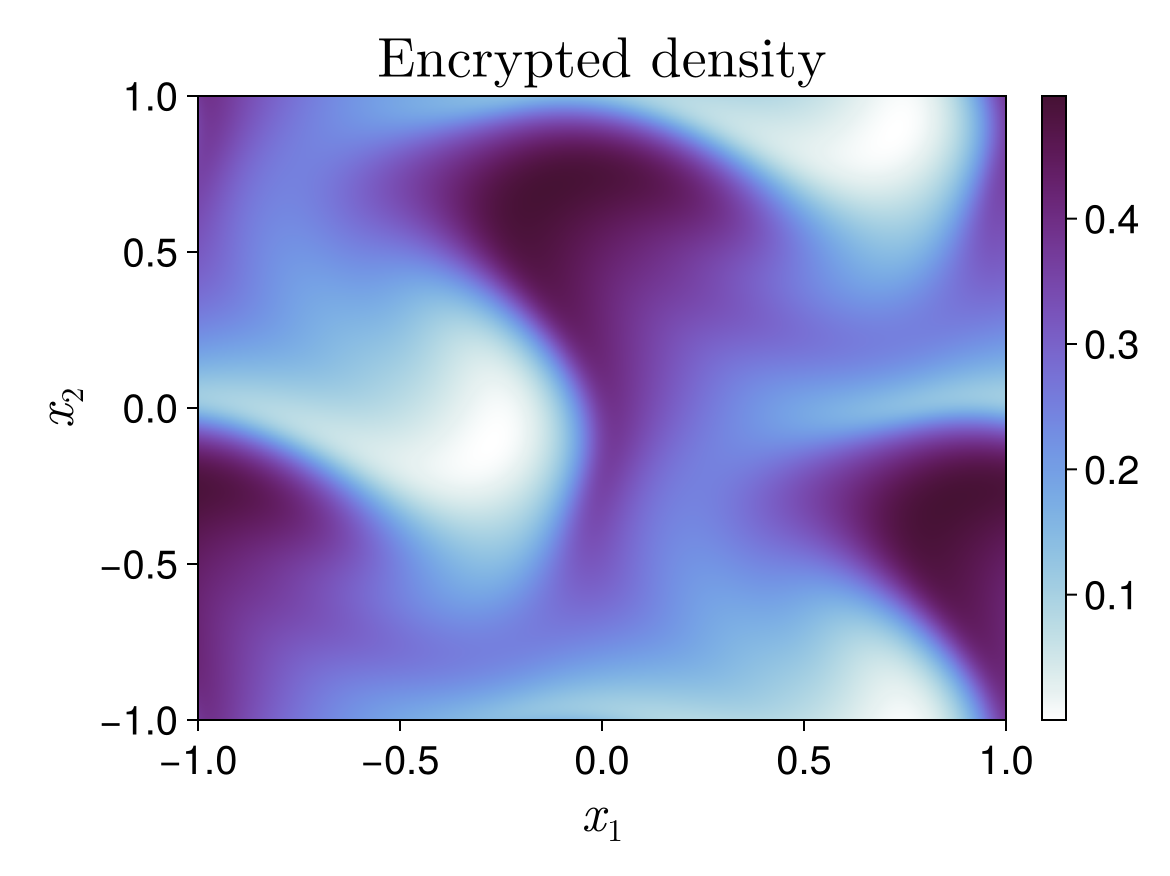}
    \end{subfigure}
    \hfill
    \begin{subfigure}[b]{0.325\textwidth}
        \centering
        \includegraphics[width=\textwidth]{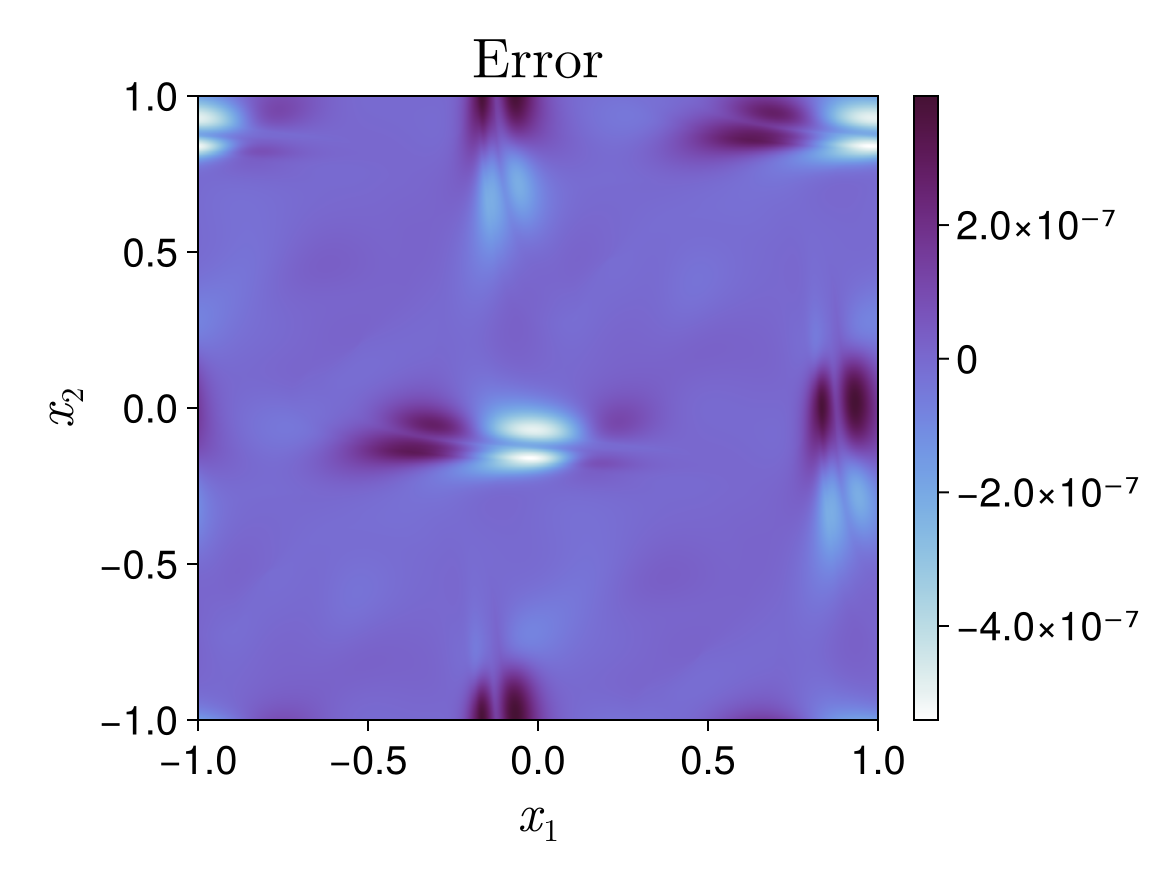}
    \end{subfigure}

    \caption{Discontinuous initial data (left) being encrypted until $T=3$ (middle) and error (right). Here, the CWENO scheme was used with $N=2048$ cells in each direction.}
    \label{fig:smoothplots}
\end{figure}

\begin{figure}[ht]
\centering
\begin{minipage}{0.45\textwidth}
  \centering
  \includegraphics[width=0.8\linewidth]{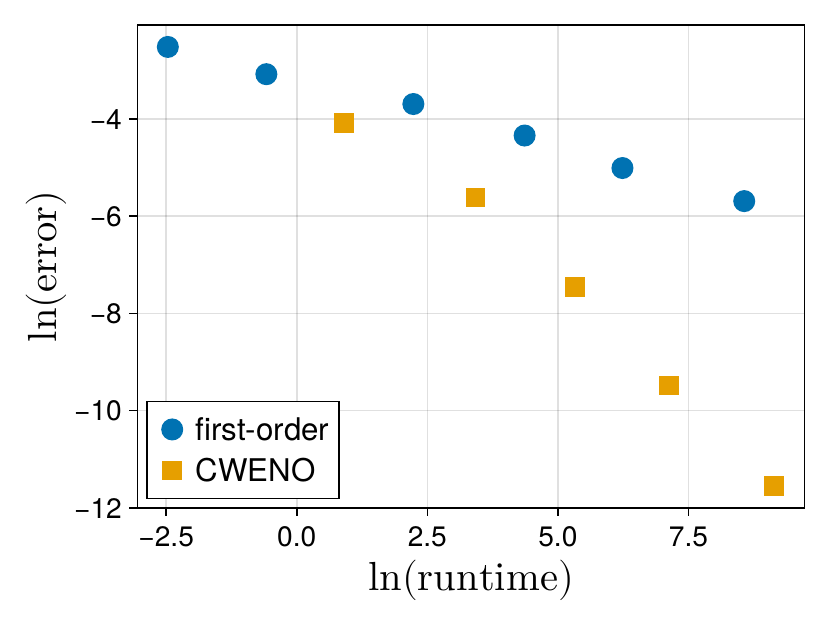}
\end{minipage}
\begin{minipage}{0.45\textwidth}
  \centering
  \begin{tabular}{c|cc|cc}
    &\multicolumn{2}{c|}{Upwind (first-order)}&\multicolumn{2}{c}{CWENO-Upwind}\\
    $N$ & error & c.r. & error & c.r. \\
    \hline
64 & 8.02e-2 & - & 3.62e-3 & - \\
128 & 4.58e-2 & 0.808 & 5.72e-4 & 2.66 \\
256 & 2.48e-2 & 0.885 & 7.59e-5 & 2.91 \\
512 & 1.30e-2 & 0.935 & 9.64e-6 & 2.98 \\
1024 & 6.65e-3 & 0.964 & 1.21e-6 & 3.00 \\
2048 &  3.37e-3 & 0.981  & 1.52e-7 &  3.00\\
\end{tabular}
\end{minipage}
\caption{
  Error depending on the runtime in seconds (left) and a table with the errors and convergence rates (right). We compare the first-order Upwind-type scheme with the third-order CWENO scheme.
  }
\label{fig:smooth}
\end{figure}

\subsection{Crowd movements}
The system \eqref{eq:system} can describe crowd movements for two populations.
Following \cite{BGIV20,GR24} we include an additional stationary density $\rho^3 = R_c \chi_{\Omega^c}$ representing the obstacles at $\Omega^c \subset \R^2$ as a high and constant value.
We specifically consider, similar to \cite{ACG15,BGIV20,goatin2025pedestrians,GR24,CGL12},
\begin{align}\label{eq:pedmodel}
    \begin{cases*}
        \ddt \rho^1 + \Div \Bigl[ \rho^1 \, v_{\max} \; (1-\rho^1)  \mleft( \mathbf{w}^1(\xb) - \beta \frac{  \nabla \tilde \eta_{\ell} * (\rho^2 + \rho^3) }{\sqrt{1+ ||\nabla \tilde \eta_{\ell} * (\rho^2 + \rho^3) ||^2}} \mright) \Bigr] = 0,\\
        \ddt \rho^2 + \Div \Bigl[ \rho^2 \, v_{\max} \; (1-\rho^2) \mleft( \mathbf{w}^2(\xb) - \beta \frac{  \nabla \tilde \eta_{\ell} * (\rho^1 + \rho^3) }{\sqrt{1+ ||\nabla \tilde \eta_{\ell} * (\rho^1 + \rho^3) ||^2}} \mright) \Bigr] = 0.
    \end{cases*}
\end{align}
Note that this flux ensures non-negative solutions bounded above by one. Equipping the CWENO scheme with the scaling limiter \eqref{eq:limiter} ensures the same for the numerical solution, cf.\ Thm.~\ref{eq:maxprinciple}.

In this experiment, the domain $\Omega= \widetilde{\Omega}\setminus \Omega_1^c$ describes a corridor $\widetilde{\Omega} = [-5,5] \times [-2,2]$ with an obstacle ${\Omega^c = B_{0.25}(-0.75,-0.75)}$, i.e.\ a circle of radius 0.25 centered at $(-0.75,-0.75)^T$.
The vector field $\mathbf{w}^k$, $k=1,2$, describes the space dependent target direction of each population and for the sake of simplicity we choose $\mathbf{w}^1=(1,0)^T$ and $\mathbf{w}^2=(-1,0)^T$. The first population is initialized by $\rho_0^1(\xb) = 0.8 \, \raisebox{0.6ex}{\scalebox{1.2}{$\chi$}}_{(-4.5,-3.5)\times (-4/3,4/3)}$ and the second population by $\rho_0^2(\xb) = 0.4 \, \raisebox{0.6ex}{\scalebox{1.2}{$\chi$}}_{(3.5,4.5)\times (-4/3,4/3)}$. These settings are visualized in Fig.~\ref{fig:crowd} (top left).
We impose absorbing boundary conditions, for simplicity, on all boundaries of $\widetilde{\Omega}$ and prescribe vanishing density in $\R^2 \setminus \widetilde{\Omega}$. 
Moreover, we choose
\begin{equation}
    R_c = 5,\quad \beta = 0.8, \quad v_{\max}=4.5 ,\quad \tilde{\eta}_\ell(\xb) = \frac{315}{128 \pi \ell^{18}} (\ell^4 - ||\xb||_2^4)^4 \; \raisebox{0.6ex}{\scalebox{1.2}{$\chi$}}_{B_\ell(0)}\mleft( \left\lVert \xb \right\rVert _2 \mright) \text{ with } \ell={0.5}
\end{equation}
due to the assumptions in \cite{GR24, BGIV20} concerning the domain, which guarantee well-posedness and prevent that high densities in $\rhob$ are entering obstacle regions.

 We again use the CWENO scheme with the Lax-Friedrichs-type numerical flux \eqref{eq:LxFnew} with $\alpha = v_{\max}$. 
 Note that in this case it does not simplify to the Upwind-type flux as in Sec.~\ref{sec:example1}. 
 We now examine the maximum principle for this numerical example, i.e.\ we compute the solution at a final time $T=1.4$ with and without the scaling limiter \eqref{eq:limiter}. We use a grid with cells of size $\dx=\dy= 0.01$ and
 the time step size is set to $\dt = \frac{\dx}{4 v_{\max} (1+\beta)} C_{\text{SSP}} \approx 1.76 \cdot 10^{-4}$ or additionally scaled by the restriction factor $1/6$ due to Thm.~\ref{eq:maxprinciple}. Furthermore, we examine the necessity of this restriction factor by computing the minimal and maximal values applying the limiter \eqref{eq:limiter} without this adaptation of the CFL number. 

\begin{figure}[tbh]
    \centering
    \includegraphics[width=0.9\textwidth]{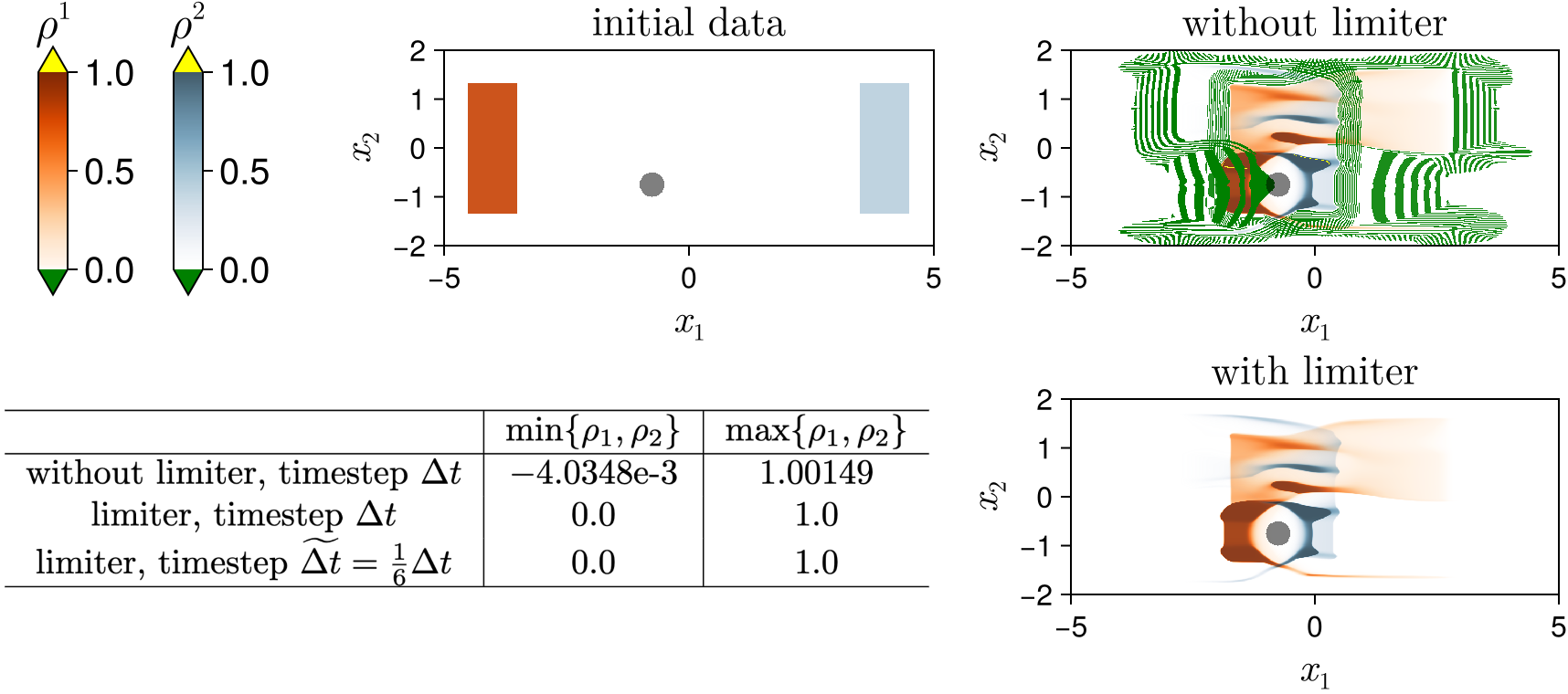} 
    \caption{Crowd dynamics with initial data (top left) and density at $T=1.4$ (right), where in the top right plot the CWENO scheme without a limiter was used and in the bottom-right plot it was augmented by the scaling limiter \eqref{eq:limiter}. Minimum and maximum density values (bottom left) without a limiter and with the scaling limiter \eqref{eq:limiter}, where the CFL is either unscaled or scaled according to Thm.~\ref{eq:maxprinciple}.}
    \label{fig:crowd}
\end{figure}
% \begin{table}[ht!]
% \centering
% \begin{tabular}{c|c|c}
% \hline
%  & $\min\{\rho_1,\rho_2\}$ & $\max\{\rho_1,\rho_2\}$ \\
% \hline
% without limiter, timestep $\dt$
% & $-4.0348$e-3 
% & $1.00149$ \\
% limiter, timestep $\dt$
% & $0.0$ 
% & $1.0$\\
% limiter, timestep $\widetilde\dt = \frac{1}{6} \dt$
% & $0.0$ 
% & $1.0$\\
% \hline
% \end{tabular}
% \end{table}

 The corresponding densities
 are displayed in Fig.~\ref{fig:crowd} on the right hand side, where the upper plot belongs to the density obtained without using the scaling limiter. Here, the negative values are marked in green and the values exceeding the upper bound of 1 are colored in yellow. While the latter applies only to small areas in high density regions, there are many cells with negative densities. 
 The lower plot on the right of Fig.~\ref{fig:crowd} shows the results with the scaling limiter and scaled time step size $\widetilde\dt = \frac{1}{6} \dt$. We do not observe any values exceeding the desired range given by the maximum principle. 
 To underline this we consider the minimum and maximum of the density values for the two populations in the bottom-left part of Fig.~\ref{fig:crowd}. Without the limiter we violate both bounds by approximately $10^{-3}$, whereas
 with the limiter, we preserve them.
 This is the case for both results, with and without the restriction of the CFL, i.e.\ using $\widetilde\dt$ or $\dt$, respectively.
 Thus, the scaling limiter is fundamental for the maximum principle, while the CFL restriction factor of $\frac{1}{6}$ appears to be negligible. 
 Therefore, the increased computational effort resulting from this factor can be eliminated.

  \section{Conclusion}\label{sec:Conclusion}
  In this work we considered a higher-order CWENO scheme for nonlocal systems of conservation laws in two spatial dimensions. We have proven that this scheme, equipped with a linear scaling limiter, satisfies a maximum principle.
  This theoretical finding was validated in the numerical experiments, where we also observed that the CFL restriction factor emerging from the maximum principle proof has a much smaller impact than the limiter itself.
  Moreover, numerical results demonstrated the expected third-order of convergence and the performance compared to first-order schemes.

  \paragraph{Acknowledgements}
  Both authors are supported by the German Research Foundation (DFG) through SPP 2410 `Hyperbolic Balance Laws in Fluid Mechanics: Complexity, Scales, Randomness' under grant FR 4850/1-1. In addition, A.~B. is partially funded by the DFG project 320021702/GRK2326 ’Energy, Entropy, and Dissipative Dynamics (EDDy)’.

  \bibliographystyle{siam} 
  \bibliography{references.bib}

\end{document}

%% file: figures/convolution.tex
\begin{tikzpicture}[scale=1.0]

% gamma
\def\n{3}
\def\h{1}
\pgfmathsetmacro{\a}{1/sqrt(3)}

% grid
\foreach \i in {0,...,\n}
{
    \draw[gray!40,line width=1.5pt] (\i*\h,0) -- (\i*\h,\n*\h);
    \draw[gray!40,line width=1.5pt] (0,\i*\h) -- (\n*\h,\i*\h);
}

% quadrature points
\foreach \i in {0,...,\numexpr\n-1}
{
    \foreach \j in {0,...,\numexpr\n-1}
    {
        \pgfmathsetmacro{\xc}{\i*\h+\h/2}
        \pgfmathsetmacro{\yc}{\j*\h+\h/2}

        \foreach \sx in {-1,1}
        {
            \foreach \sy in {-1,1}
            {
                \pgfmathsetmacro{\x}{\xc+\sx*\h/2*\a}
                \pgfmathsetmacro{\y}{\yc+\sy*\h/2*\a}

                \ifnum\sx=1
                    \ifnum\sy=1
                        \fill[gray!100] (\x,\y) circle (2pt);
                    \else
                        \fill[gray!80] (\x,\y) circle (2pt);
                    \fi
                \else
                    \ifnum\sy=1
                        \fill[gray!50] (\x,\y) circle (2pt);
                    \else
                        \fill[gray!30] (\x,\y) circle (2pt);
                    \fi
                \fi
            }
        }
    }
}

% blue point at the cell interface
\coordinate (P) at (2,1.5 +\h/2*\a);
\fill[blue] (P) circle (2.5pt);

\draw[decorate,decoration={brace,amplitude=6pt}]
(1.5 - \h/2*\a,1.85) -- (1.5 + \h/2*\a,1.85)
node[midway,above=3pt] {$2\qp$};

\draw[decorate,decoration={brace,amplitude=4pt, mirror}]
(1.5 + \h/2*\a,1.75) -- (2,1.75)
node[midway,below=2pt] {$\frac{1-2\qp}{2}$};

\end{tikzpicture}